      \newtheorem{theorem}{Theorem}[section]
      \newcommand{\ct}[1]{\langle {#1}\rangle \lower.3ex\hbox{$_{t}$}}
      \newcommand{\lt}[1]{[ {#1}] \lower.3ex\hbox{$_{t}$}}
\newtheorem*{mthm}{Longinetti's convexity}
\begin{document}

\title[Geometric Capacity Potentials on Convex Plane Rings]{Geometric Capacity Potentials on Convex Plane Rings}

\author{Jie Xiao}
\address{Department of Mathematics and Statistics, Memorial University, St. John's, NL A1C 5S7, Canada}
\email{jxiao@mun.ca}
\thanks{This project was supported by NSERC of Canada and Memorial's University Research Professorship.}

\subjclass[2010]{31A15, 35J05, 35J92, 53A04, 53A30}

\date{}


\keywords{}

\begin{abstract}
Under $1<p\le 2$, this paper presents some old and new convexity/isoperimetry based inequalities for the variational $p$-capacity potentials on convex plane rings.
\end{abstract}
\maketitle


\section{Introduction}\label{s1}
\setcounter{equation}{0}

A pair $(K,\Omega)$ of sets in the plane $\mathbb R^2$ will be called a condenser if $\Omega$ is a open subset of $\mathbb R^2$ and $K$ is a compact subset of $\Omega$. And, a condenser $(K,\Omega)$ will be called a ring whenever $\Omega\setminus K$ is connected and $(\{\infty\}\cup\mathbb R^2)\setminus(\Omega\setminus K)$ comprises only two components (cf. \cite{Sar}). Under $1<p\le 2$ the variational $p$-capacity of a given condenser $(K,\Omega)$ is defined as 
$$
pcap(K,\Omega)=\inf\left\{\int_{\Omega}|\nabla f|^p\,dA:\ \ f\in \dot{W}^{1,p}(\Omega),\ \ f\ge 1_K\right\}
$$
where $\dot{W}^{1,p}(\Omega)$ is the completion of all infinitely differentiable functions $g$ on $\mathbb R^2$ with compact support in $\Omega$ under the Sobolev semi-norm $\big(\int_{\Omega}|\nabla g|^p\,dA\big)^\frac1p<\infty$, $dA$ is the differential element of area on $\mathbb R^2$, and $1_K$ represents the indicator function of $K$. A function that realizes the above infimum is called a $p$-capacity potential. When the infimum is finite, there exists a unique $p$-capacity potential $u$ which solves the following boundary value problem:

\begin{equation}
\label{e1a}
\begin{cases}
div(|\nabla u|^{p-2}\nabla u)\big|_{\Omega\setminus K}=0;\\
u|_{\partial\Omega}=0;\\
u|_{K}=1.
\end{cases}
\end{equation}

As an interesting topic in calculus of variations and mathematical physics (see e.g. \cite{Flu, Pol}), the variational $p$-capacity problem is to study the essential properties of a solution $u$ to (\ref{e1a}) and hence of the capacity $pcap(K,\Omega)$. If a given condenser $(K,\Omega)$ is good enough - for example - $(K,\Omega)$ is a ring with $K$ and $\Omega$ being convex, then an integration-by-parts can be performed to establish the level curve representation (cf. \cite{Rom, PP}) of the $p$-capacity of $(K,\Omega)$:
\begin{equation}\label{e1b}
pcap(K,\Omega)=\int_{\{z\in \overline{\Omega}\setminus K^\circ:\ \ u(z)=t\}}|\nabla u|^{p-1}\,dL\quad\forall\quad t\in [0,1],
\end{equation}
where $\overline{\Omega}$, $K^\circ$ and $dL$ stand for the closure of $\Omega$, the interior of $K$ and the differential element of length on $\mathbb R^2$ respectively, and 
$$
|\nabla u(x)|\equiv\liminf_{\Omega\setminus K\ni y\to x}|\nabla u(y)|=\limsup_{\Omega\setminus K\ni y\to x}|\nabla u(y)|\quad\forall\quad x\in \partial(\overline{\Omega}\setminus K^\circ).
$$
In accordance with \cite[page 13]{Flu}, the $p$-modulus of a condenser $(K,\Omega)$ is decided by
\begin{equation}
\label{e1c}
pmod(K,\Omega)=\big(pcap(K,\Omega)\big)^\frac{1}{1-p}.
\end{equation}
Of course, it is well-known that if $p=2$ then (\ref{e1a}), (\ref{e1b}) and (\ref{e1c}) are the conformal capacity problem, the conformal capacity and the conformal modulus of $(K,\Omega)$ respectively.
 
An important limiting case of (\ref{e1a}) should get special treatment. More precisely, if $K$ reduces to a point $o\in \Omega$, then we are led to consider the singular $p$-capacity potential -- the $p$-Green function $g_{\Omega}(o,\cdot)$ of $\Omega$ with singularity at $o$:

\begin{equation}
 \label{e1d}
 \begin{cases}
 div(|\nabla g_{\Omega}(o,\cdot)|^{p-2}\nabla g_{\Omega}(o,\cdot))\big|_{\Omega\setminus K}=-\delta(o,\cdot);\\
 g_{\Omega}(o,\cdot)|_{\partial\Omega}=0,
 \end{cases}
 \end{equation}
where $\delta(o,\cdot)$ is the Dirac distribution at $o$. According to \cite[pages 8, 13 and 63]{Flu}, if
\begin{equation}\label{e1e}
k_p(r)=\begin{cases}
\Big(\frac{p-1}{2-p}\Big)(2\pi)^\frac1{1-p}r^\frac{p-2}{p-1}\quad\hbox{for}\quad 1<p<2;\\
(2\pi)^{-1}\ln r^{-1} \quad\hbox{for}\quad p=2,
\end{cases}
\end{equation}
then
\begin{equation}\label{e1f}
\tau_p(o,\Omega)=\lim_{z\to o}\big(k_p(|z-o|)-g_{\Omega}(o,z)\big)
\end{equation}
is called the $p$-Robin function of $\Omega$ at $o$, and hence the function $\rho_p(o,\Omega)$ determined by 
\begin{equation}
\label{e1g}
k_p(\rho_p(o,\Omega))=\tau_p(o,\Omega)
\end{equation}
is said to be the $p$-harmonic (conformal as $p=2$) radius. Interestingly, such a radius can be utilized to estimate $pcap\big(\overline{D(o,r)},\Omega\big)$ for $\overline{D(o,r)}$ being the closed disk centered at $o$ with radius $r\to 0$ (cf. \cite[pages 81-82]{Flu}):
\begin{equation}\label{e1h}
\tau_p(o,\Omega)=\begin{cases}
\lim_{r\to 0}\frac{pcap\big(\overline{D(o,r)},\Omega\big)-pcap\big(\overline{D(o,r)},\mathbb R^2\big)}{(p-1)\big(pcap\big(\overline{D(o,r)},\mathbb R^2\big)\big)^\frac{p}{p-1}}\quad\hbox{for}\quad 1<p<2;\\
\lim_{r\to 0}\Big(\big(pcap\big(\overline{D(o,r)},\Omega\big)\big)^{-1}-(2\pi)^{-1}\ln r^{-1}\Big)\quad\hbox{for}\quad p=2,
\end{cases}
\end{equation}
where
\begin{equation}
\label{e1i}
pcap\big(\overline{D(o,r)},\mathbb R^2\big)=\begin{cases}
2\pi\Big(\frac{p-1}{2-p}\Big)^{1-p}r^{2-p}\quad\hbox{for}\quad 1<p<2;\\
r\quad\hbox{for}\quad p=2.
\end{cases}
\end{equation}

A careful look at (\ref{e1b}) reveals that the level curve of the $p$-capacity potential $u$ of (\ref{e1a}) plays a decisive role. Moreover, for $t\in [0,1]$, let $A(t)$ be the area of the set bounded by the closed level curve $\{z\in \overline{\Omega}\setminus K^\circ:\ u(z)=t\}$ whose length is denoted by $L(t)$. Then we have the isoperimetric inequality below:
\begin{equation}
\label{e1n}
A(t)\le (4\pi)^{-1}{\big(L(t)\big)^2}.
\end{equation}
This (\ref{e1n}), together with the closely related works \cite{Log, Ale, Lau}, suggests us to further find geometric properties induced by  (\ref{e1a})-(\ref{e1i}) such as optimal estimates for the area and perimeter of a level set of either $p$-capacity potential of a convex ring and or $p$-Green function of a bounded convex domain, as well as sharp isoperimetric inequalities involving $p$-capacity  - the details will be respectively presented in the forthcoming three sections: \S\ref{s2}-\S\ref{s3}-\S\ref{s4}.

\section{Longinetti's convexity for $p$-capacity potentials of convex rings}\label{s2}
\setcounter{equation}{0}

 Referring to \cite[Section 2]{Log}, in what follows we always suppose that $\Omega$ is a planar convex domain containing the origin, $\nu=(\cos\theta,\sin\theta)$ is the exterior unit normal vector to the boundary $\partial\Omega$ at the point $z=(z_1,z_2)\in\partial\Omega$, and
\begin{equation}
\label{e2a}
h(\theta)=z\cdot\nu=z_1\cos\theta+z_2\sin\theta
\end{equation}  
is the support function $h_K$ of $K=\overline{\Omega}$ which measures the Euclidean distance from the origin to the support line $\ell$ supporting $\partial\Omega$ at $z$ orthogonal to $\nu$. Here it is worth recalling the differentiability and integrability of the support function below: 

\begin{itemize}
\item if $\partial\Omega$ is strictly convex and $\ell$ supports $\partial\Omega$ at $z$ only then $h$ is of class $C^1$ and
  \begin{equation}
  \label{e2b}
  h'(\theta)=\frac{d}{d\theta}h(\theta)=-z_1\sin\theta+z_2\cos\theta;
  \end{equation}
\item if $\partial\Omega$ is of class $C^2$ and its curvature $\kappa(\theta)$ is positive then $h$ is of class $C^2$ and
\begin{equation}
\label{e2c}
h''(\theta)=\frac{d^2}{d\theta^2}h(\theta)=\big(\kappa(\theta)\big)^{-1}-h(\theta).
\end{equation}

\item the area $A=A(\Omega)$ of $\Omega$ and the length $L=L(\Omega)$ of $\partial\Omega$ are determined by
\begin{equation}
\label{e2d}
\begin{cases}
A=2^{-1}\int_{\mathbb S^1}h(\theta)\big(\kappa(\theta)\big)^{-1}\,d\theta;\\
L=\int_{\mathbb S^1}h(\theta)\,d\theta=\int_{\mathbb S^1}\big(\kappa(\theta)\big)^{-1}\,d\theta,
\end{cases}
\end{equation}
where $\mathbb S^1$ is the unit circle and may be identified with the interval $[0,2\pi)$.

\end{itemize}

With the help of (\ref{e2a})-(\ref{e2d}), Longinetti obtained the following assertion - see also \cite[Theorems 3.1-3.2]{Log}:

\begin{mthm}\label{t2a} Given $p\in (1,2]$ and two convex domains $\Omega_0$ and $\Omega_1$ with $0\in\Omega_1\subset \overline{\Omega}_0\subset\mathbb R^2$. For a solution $u$ to (\ref{e1a}) with $K=\overline{\Omega}_1$ and $\Omega=\Omega_0$ and $t\in [0,1]$ let $\Omega_t$ be the convex domain bounded by the level curve ${\Gamma_t}\equiv\{z\in{\overline{\Omega}_0}\setminus\Omega_1:\ u(z)=t\}$ as well as $A(t)$ and $L(t)$ be the area of $\Omega_t$ and the length of $\partial\Omega_t={\Gamma_t}$ respectively. Then
\begin{equation}
\label{e2e}
\begin{cases}
A'(t)A'''(t)-2p^{-1}\big(A''(t)\big)^2\ge 0;\\
L(t)L''(t)-(p-1)^{-1}\big(L'(t)\big)^2\ge 0,
\end{cases}
\end{equation}
where each equality in (\ref{e2e}) holds when and only when all level curves $\{{\Gamma_t}\}_{t\in [0,1]}$ are concentric circles.    
\end{mthm}
 
 In order to work out the area-analogue of the second inequality in (\ref{e2e}), we have the following assertion whose (i) and (ii) with $p=2$ are due to Longinetti - see also \cite[(3.28), (3.29) and (5.13)]{Log}.

\begin{theorem}\label{t2b} Under the same assumptions as in Longinetti's convexity, one has:
\item{\rm(i)} if $\partial\Omega_1$ is a circle then 
\begin{equation}
\label{e2h}
2(p-1)A(t)A''(t)\ge p\big(A'(t)\big)^2.
\end{equation}

\item{\rm(ii)} if $|\nabla u|$ equals a constant on $\partial\Omega_0$ then
 \begin{equation}
 \label{e2i}
 2(p-1)A(t)A''(t)\le p\big(A'(t)\big)^2.
 \end{equation}
\end{theorem}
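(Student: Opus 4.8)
The plan is to deduce both \eqref{e2h} and \eqref{e2i} from a single monotonicity principle for the quantity
$$\Phi(t):=2(p-1)A(t)A''(t)-p\big(A'(t)\big)^2 ,$$
combined with a one-line evaluation of the sign of $\Phi$ at the endpoint of $[0,1]$ dictated by the relevant hypothesis. First I would recall, following \cite[Section 2]{Log}, the support-function picture of the level curves: write $h=h(\theta,t)$ for the support function of $\Omega_t$ about the origin and parametrize $\Gamma_t$ by the exterior normal angle $\theta$, so that (subscripts denoting partial derivatives) $A(t)=\tfrac12\int_{\mathbb S^1}\big(h^2-h_\theta^2\big)\,d\theta=\tfrac12\int_{\mathbb S^1}h\,(h+h_{\theta\theta})\,d\theta$ and $L(t)=\int_{\mathbb S^1}h\,d\theta$ in view of \eqref{e2d}. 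Differentiating $u$ along the curves gives $|\nabla u|=-1/h_t$ on $\Gamma_t$ (so $h_t<0$), and transcribing the $p$-Laplace equation $\operatorname{div}(|\nabla u|^{p-2}\nabla u)=0$ from \eqref{e1a} into the $(\theta,t)$ variables produces the evolution equation
$$(p-1)\big((h+h_{\theta\theta})\,h_{tt}-h_{\theta t}^{2}\big)=h_t^{2}.$$
Differentiating $A$ twice in $t$, integrating by parts in $\theta$, and substituting this equation collapse everything to the two identities that drive the argument:
$$A'(t)=\int_{\mathbb S^1}(h+h_{\theta\theta})\,h_t\,d\theta<0,\qquad A''(t)=\frac{p}{p-1}\int_{\mathbb S^1}h_t^{2}\,d\theta>0 .$$

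With these in hand I would bring in the first inequality of Longinetti's convexity, $A'A'''\ge\tfrac2p(A'')^2$. Setting $B:=-A'>0$ it reads $BB''\ge\tfrac2p(B')^2$; a short computation, in which one uses $\Phi=-2(p-1)AB'-pB^2$ to eliminate $AB'$, then gives the differential inequality
$$\Phi'(t)\le \frac{2A''(t)}{p\,A'(t)}\,\Phi(t),\qquad t\in[0,1].$$
Since $\int_0^t \tfrac{2A''}{pA'}=\tfrac2p\log\frac{-A'(t)}{-A'(0)}$, this is equivalent to the statement that $t\mapsto \Phi(t)\big/\big(-A'(t)\big)^{2/p}$ is non-increasing on $[0,1]$. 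As $(-A')^{2/p}>0$ throughout, it follows that $\Phi\ge 0$ on all of $[0,1]$ as soon as $\Phi(1)\ge 0$, and that $\Phi\le 0$ on all of $[0,1]$ as soon as $\Phi(0)\le 0$ — which are precisely \eqref{e2h} and \eqref{e2i}.

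It then only remains to check the two endpoint signs. For (i): if $\partial\Omega_1$ is a circle of radius $\rho_1$ its radius of curvature is constant, so $h(\theta,1)+h_{\theta\theta}(\theta,1)\equiv\rho_1$ and $A(1)=\pi\rho_1^2$; feeding this into the identities above,
$$\Phi(1)=p\rho_1^2\Big(2\pi\int_{\mathbb S^1} h_t(\theta,1)^2\,d\theta-\big(\int_{\mathbb S^1} h_t(\theta,1)\,d\theta\big)^2\Big)\ge 0$$
by the Cauchy–Schwarz inequality on $\mathbb S^1$. For (ii): if $|\nabla u|$ equals the constant $c_0>0$ on $\partial\Omega_0$, then $h_t(\theta,0)\equiv-1/c_0=:-m$, so $A'(0)=-mL(0)$ and $A''(0)=\tfrac{2\pi p}{p-1}m^2$, whence
$$\Phi(0)=pm^2\big(4\pi A(0)-L(0)^2\big)\le 0$$
by the isoperimetric inequality \eqref{e1n} applied to $\Gamma_0=\partial\Omega_0$. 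Together with the monotonicity of $\Phi/(-A')^{2/p}$ this finishes both parts.

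I expect the main obstacle to lie not in the final assembly but in the two preparatory facts carried out behind the scenes: the transcription of the $p$-Laplace equation into the evolution equation for $h$ and the resulting identity $A''(t)=\tfrac{p}{p-1}\int_{\mathbb S^1}h_t^{2}\,d\theta$, together with the regularity needed to make this licit ($A\in C^3$, the level curves smooth and strictly convex for $t\in(0,1)$, and $|\nabla u|=-1/h_t$ valid up to $t=0,1$), as well as the verification that $-A'$ is positive and continuous on the closed interval $[0,1]$ so that the monotone quantity $\Phi/(-A')^{2/p}$ is meaningful at the endpoints. All of this sits within Longinetti's framework (cf.\ \cite[(3.28),(3.29),(5.13)]{Log}); once it is granted, the monotonicity of $\Phi/(-A')^{2/p}$ and the two endpoint identities yield \eqref{e2h} and \eqref{e2i} at once.
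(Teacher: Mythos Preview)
Your argument is correct and follows the same overall strategy as the paper for part~(ii): form a monotone quantity from $\Phi(t)=2(p-1)A(t)A''(t)-p(A'(t))^2$ using the first inequality of Longinetti's convexity, then pin down the sign of $\Phi$ by an endpoint evaluation that reduces to the isoperimetric inequality \eqref{e1n}. The paper normalizes by $A^{1/(p-1)}$, i.e.\ it shows $M_p(t)=\Phi(t)/\big(2(p-1)A(t)^{1/(p-1)}\big)$ is non-increasing and checks $M_p(0)\le 0$; you instead normalize by $(-A')^{2/p}$ and show $\Phi/(-A')^{2/p}$ is non-increasing. Both normalizations yield clean differential inequalities equivalent to $A'A'''\ge\tfrac{2}{p}(A'')^2$, so this is a cosmetic difference. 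The genuine addition in your write-up is a self-contained proof of part~(i): the paper simply cites \cite[(3.29)]{Log}, whereas you exploit the \emph{same} monotone quantity at the other endpoint, reducing $\Phi(1)\ge 0$ to the Cauchy--Schwarz inequality on $\mathbb S^1$ via the identity $A''=\tfrac{p}{p-1}\int_{\mathbb S^1}h_t^2\,d\theta$ and the constancy of $h+h_{\theta\theta}$ on the inner circle. This unified treatment is a modest but pleasant improvement; what it buys is that (i) and (ii) become two instances of a single monotonicity principle rather than one citation plus one computation.
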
 
\begin{proof} (i) This follows from \cite[(3.29)]{Log}.

(ii) To verify (\ref{e2i}), let
$$
M_p(t)=\frac{A(t)A''(t)-p(2p-2)^{-1}\big(A'(t)\big)^2}{\big(A(t)\big)^\frac1{p-1}}\quad\forall\quad t\in [0,1].
$$
Then a straightforward computation, along with the first inequality of (\ref{e2e}) and the fact $A'(t)\le 0$, gives
\begin{align*}
M_p'(t)&=\big(A(t)\big)^\frac1{1-p}\Big(A'''(t)A(t)+A''(t)A'(t)-p(2p-2)^{-1}A'(t)A''(t)\\
&\ \ \ \ +\big(A''(t)A'(t)-p(2p-2)^{-1}(A'(t))^3(A(t))^{-1}\big)(1-p)^{-1}\Big)\\
&\le 2p^{-1}\big(A'(t)(A(t))^\frac{2-p}{1-p}\big(M_p(t)\big)^2\le 0.
\end{align*}
As a consequence, one has
$$
M_p(t)\le M_p(0)\quad\forall\quad t\in [0,1].
$$
If $M_p(0)\le 0$ then the argument is complete. To see this, we are required to calculate $A'(0)$ and $A''(0)$. Note that $|\nabla u|$ is a constant, say, $c$ on $\partial\Omega_0$. So, an application of (\ref{e1b}) yields
$$
pcap(K_1,\Omega_0)=c^{p-1}L(0).
$$
For each $t\in [0,1]$ suppose that $h(\theta,t)$ and $\kappa(\theta,t)$ are the support function and the curvature function of the level curve ${\Gamma_t}=\{z\in \overline{\Omega}_0\setminus\Omega_1: u(z)=t\}$. Then, according to the Lewis convexity in \cite[Theorem 1]{Lewis}, one has that for each $t\in (0,1)$ the level curve ${\Gamma_t}$ is strictly convex and $|\nabla u|\not=0$ in $\overline{\Omega}_0\setminus\Omega_1$. Moreover, one has the following formulas for the first-order and second-order derivatives of 
\begin{equation}
\label{e2j-}
\begin{cases} A(t)=2^{-1}\int_{\mathbb S^1}h(\theta,t)\big(\kappa(\theta,t)\big)^{-1}\,d\theta;\\
L(t)=\int_{\mathbb S^1}h(\theta,t)\,d\theta,
\end{cases}
\end{equation}
(cf. \cite[3.11-3.12 and 3.5-3.6]{Log}):
\begin{equation}
\label{e2j}
\begin{cases}
A'(t)=\int_{\mathbb S^1}\big(\frac{\partial}{\partial t}h(\theta,t)\big)\big(\kappa(\theta,t)\big)^{-1}\,d\theta;\\
A''(t)=\int_{\mathbb S^1}\Big(\big(\frac{\partial}{\partial t}h(\theta,t)\big)\frac{\partial}{\partial t}\big(\kappa(\theta,t)\big)^{-1}+\big(\frac{\partial^2}{\partial t^2}h(\theta,t)\big)\big(\kappa(\theta,t)\big)^{-1}\Big)\,d\theta;\\
 L'(t)=\int_{\mathbb S^1}\frac{\partial}{\partial t}h(\theta,t)\,d\theta;\\
 L''(t)=\int_{\mathbb S^1}\frac{\partial^2}{\partial t^2}h(\theta,t)\,d\theta.
\end{cases}  
\end{equation}
The preceding formulas (\ref{e2j-})-(\ref{e2j}), along with \cite[(2.13)]{Log} which particularly ensures 
\begin{equation}
\label{e2k}
|\nabla u|\big|_{\partial\Omega_0}\frac{\partial}{\partial t}h(\theta,t)\Big|_{t=0}=-1\\
\end{equation}
and so (cf. \cite[(3.20)]{Log})
$$
\frac{\partial^2}{\partial t^2}h(\theta,t)\big|_{t=0}=(p-1)^{-1}\left(\frac{\partial}{\partial t}h(\theta,t)\Big|_{t=0}\right)^2\kappa(\theta,t)\big|_{t=0},
$$
give
$$
A'(0)=-L(0)c^{-1}=-\big(L(0)\big)^\frac{p}{p-1}\big(pcap(\overline{\Omega}_1,\Omega_0)\big)^\frac1{1-p}
$$
and
$$
A''(0)=-L'(0)c^{-1}+2\pi(p-1)^{-1}c^{-2}=2\pi p(p-1)^{-1}\left(\frac{L(0)}{pcap(\overline{\Omega}_1,\Omega_0)}\right)^\frac2{p-1}.
$$
Now, using the case $t=0$ of (\ref{e1n}) one gets
$$
M_p(0)=\left(\frac{p(2p-2)^{-1}}{(pcap(\overline{\Omega}_1,\Omega_0))^\frac2{p-1}}\right)\left(\frac{(L(0))^2}{(A(0))^{2-p}}\right)^\frac1{p-1}\left(4\pi-\frac{(L(0))^2}{A(0)}\right)\le 0,
$$
as desired.
\end{proof} 
 
 \section{Isoperimetry for $p$-capacities of convex rings and bounded condensers}\label{s3} 
 \setcounter{equation}{0}
 
 In \cite[Theorem 1.1]{HS} (cf. \cite[Theorem 4.2]{Lewis1}) Henrot-Shahgholian showed that for $1<p\le 2$, a bounded convex domain $\Omega_1\subset\mathbb R^2$ and a constant $c>0$, there is a unique convex domain $\Omega_0\supset K_1=\overline{\Omega}_1$ in $\mathbb R^2$ such that
 \begin{equation}
 \label{e2l}
 \begin{cases}
 div(|\nabla u|^{p-2}\nabla u)\big|_{\Omega_0\setminus K_1}=0;\\
 u|_{\partial\Omega_0}=0;\\
 u|_{\partial\Omega_1}=1;\\
 |\nabla u|\big|_{\partial\Omega_0}=c.
 \end{cases}
 \end{equation}
 This fact leads to an isoperimetry for $pcap(\overline{\Omega}_1,\Omega_0)$ that extends Carleman's inequality  \cite[(5.10)]{Log} (cf. \cite[Proposition A.1]{SS}), Longinetti's inequality \cite[(5.4)]{Log} (cf. \cite[Theorem]{Fra} for another lower bound estimate for the case $p=2$) and Longinetti's isoperimetric deficit monotonicity \cite[(5.12)]{Log}.
 
 \begin{theorem}\label{t2c} For
 $1<p\le 2, c>0$ and $K_1=\overline{\Omega}_1$ let 
 $c_p=\big((2\pi)^{-1}pcap(K_1,\Omega_0)\big)^\frac1{p-1}$, $\Omega_0\supset K_1$
 and $(u,\Omega_0\setminus K_1,c)$ satisfy (\ref{e2l}).
 Then one has:
 \item{\rm(i)} an isoperimetry for the variational capacity 
 \begin{equation}
\label{e2m}
\begin{cases}
\big(\frac{L(1)}{2\pi}\big)^\frac{p-2}{p-1}-\big(\frac{L(0)}{2\pi}\big)^\frac{p-2}{p-1}\le{(\frac{2-p}{p-1})}{c_p}^{-1}\le \big(\frac{A(1)}{\pi}\big)^\frac{p-2}{2(p-1)}-\big(\frac{A(0)}{\pi}\big)^\frac{p-2}{2(p-1)}\ \ \hbox{for}\ \ 1<p<2;\\
\ln\Big(\frac{L(0)}{L(1)}\Big)\le{c_p}^{-1}\le\ln\Big(\frac{A(0)}{A(1)}\Big)^\frac12\quad\hbox{for}\quad p=2,
\end{cases}
\end{equation}
where (\ref{e2m}) holds with the sign of equality if $\Omega_0\setminus K_1$ is a circular annulus. 

\item{\rm(ii)} a monotonicity for the isoperimetric deficit 
\begin{equation}
\label{e2n}
\frac{d}{dt}\Big(\big(L(t)\big)^2-4\pi A(t)\Big)\ge 0\quad\forall\quad t\in [0,1],
\end{equation}
and consequently
\begin{equation}
\label{e2na}
\Big(\frac{L(0)}{2\pi}\Big)^2-\Big(\frac{L(1)}{2\pi}\Big)^2\le\Big(\frac{A(0)}{\pi}\Big)-\Big(\frac{A(1)}{\pi}\Big),
\end{equation}
with equality if $\Omega_0\setminus K_1$ is a circular annulus.
\end{theorem}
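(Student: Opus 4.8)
The plan is to extract everything from the two Longinetti convexity inequalities \eqref{e2e}, from Theorem \ref{t2b}(ii) — which applies because the overdetermined condition in \eqref{e2l} makes $|\nabla u|$ equal to the constant $c$ on $\partial\Omega_0$ — from the level-curve identity \eqref{e1b}, from the variation formulas \eqref{e2j-}--\eqref{e2k}, and from the isoperimetric inequality \eqref{e1n}. Throughout I would write $\ell(t)=L(t)/(2\pi)$, $a(t)=A(t)/\pi$ and $\Phi(t)=(L(t))^2-4\pi A(t)$, and recall that $L'(t)\le 0$ and $A'(t)\le 0$. Since \eqref{e2na} follows at once by integrating \eqref{e2n} over $[0,1]$, it suffices to prove (i) and the pointwise inequality \eqref{e2n}.

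For (i) with $1<p<2$ I would set $\gamma=\frac{p-2}{p-1}$ and $\alpha=\frac{p-2}{2(p-1)}$, both negative. Two differentiations show that the second line of \eqref{e2e}, namely $LL''\ge(p-1)^{-1}(L')^2$, is \emph{equivalent} to the concavity of $t\mapsto(L(t))^{\gamma}$, and that Theorem \ref{t2b}(ii), namely $2(p-1)AA''\le p(A')^2$, is \emph{equivalent} to the convexity of $t\mapsto(A(t))^{\alpha}$ — here $\gamma$ and $\alpha$ are precisely the borderline exponents that turn those one-sided second-order inequalities into sign conditions. Thus $\ell^{\gamma}$ is concave and $a^{\alpha}$ is convex on $[0,1]$, whence
\[
\ell(1)^{\gamma}-\ell(0)^{\gamma}\le\big(\ell^{\gamma}\big)'(0)\qquad\text{and}\qquad a(1)^{\alpha}-a(0)^{\alpha}\ge\big(a^{\alpha}\big)'(0).
\]
It remains to evaluate the two endpoint slopes. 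By \eqref{e2k} one has $\partial_t h(\theta,0)=-1/c$, so \eqref{e2j} gives $L'(0)=-2\pi/c$ and $A'(0)=-L(0)/c$, while \eqref{e1b} reads $pcap(K_1,\Omega_0)=c^{p-1}L(0)$. A direct substitution then produces the exact identity $\big(\ell^{\gamma}\big)'(0)=\frac{2-p}{p-1}\,c_p^{-1}$ together with
\[
\big(a^{\alpha}\big)'(0)=\frac{2-p}{p-1}\,c_p^{-1}\Big(\frac{(L(0))^2}{4\pi A(0)}\Big)^{\frac{p}{2(p-1)}}\ \ge\ \frac{2-p}{p-1}\,c_p^{-1},
\]
the last inequality being \eqref{e1n} at $t=0$ (the exponent is positive). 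Combining these four displays yields the chain \eqref{e2m}. For $p=2$, where $\gamma=\alpha=0$, the same scheme runs with logarithms: $LL''\ge(L')^2$ says $\log L$ is convex while $2AA''\le2(A')^2$ says $\log A$ is concave, and the identical endpoint computation (using $pcap(K_1,\Omega_0)=cL(0)$ and again \eqref{e1n} at $t=0$) gives the two estimates in \eqref{e2m}.

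For \eqref{e2n} I would fix $t$, abbreviate $g(\theta,t)=|\nabla u|$ on the (strictly convex, by \cite{Lewis}) level curve $\Gamma_t$, and use $-\partial_t h(\theta,t)=1/g(\theta,t)>0$ along with $dL=(\kappa(\theta,t))^{-1}\,d\theta$ to recast \eqref{e2j} as
\[
-L'(t)=\int_{\mathbb S^1}\frac{d\theta}{g(\theta,t)},\qquad -A'(t)=\int_{\mathbb S^1}\frac{(\kappa(\theta,t))^{-1}}{g(\theta,t)}\,d\theta,\qquad L(t)=\int_{\mathbb S^1}(\kappa(\theta,t))^{-1}\,d\theta.
\]
Then $\frac{d}{dt}\Phi(t)=2L(t)L'(t)-4\pi A'(t)\ge0$ is equivalent to
\[
2\pi\int_{\mathbb S^1}\frac{(\kappa(\theta,t))^{-1}}{g(\theta,t)}\,d\theta\ \ge\ \Big(\int_{\mathbb S^1}(\kappa(\theta,t))^{-1}\,d\theta\Big)\Big(\int_{\mathbb S^1}\frac{d\theta}{g(\theta,t)}\Big),
\]
and by Chebyshev's integral inequality this holds once the radius of curvature $(\kappa(\cdot,t))^{-1}$ and the function $1/g(\cdot,t)$ are similarly ordered on $\mathbb S^1$ — equivalently, once $|\nabla u|$ is, along each level curve, a monotone function of the curvature (small where the curve is flatter, large where it bends more). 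This last comonotonicity is the structural content of Longinetti's analysis of the first-order angular derivatives in \cite[Section 3]{Log} carried over to $1<p\le2$, and securing it is the main obstacle. (As a check, at $t=0$ the overdetermined condition makes $1/g(\cdot,0)\equiv1/c$ constant, both sides above coincide, and $\Phi'(0)=0$.) Finally, \eqref{e2na} follows by integrating \eqref{e2n} over $[0,1]$; alternatively, since each ring $(\overline{\Omega}_s,\Omega_0)$ has capacity potential $u/s$ again satisfying \eqref{e2l} with constant $c/s$, applying (i) to it gives $\Phi(s)\ge\Phi(0)$ for every $s\in[0,1]$, which is \eqref{e2na}. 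For the equality statements, equality anywhere forces equality in \eqref{e2e} and in \eqref{e1n} at each level, hence all $\Gamma_t$ concentric circles; conversely, for a circular annulus the radial potential makes $\ell(t)^{\gamma}$ and $a(t)^{\alpha}$ affine and $\Phi(t)\equiv0$, so \eqref{e2m} and \eqref{e2na} become equalities.
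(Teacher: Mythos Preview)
Your argument for (i) is correct. For the left inequality it is essentially the paper's proof, repackaging the double integration of the Longinetti length inequality as concavity of $L^{(p-2)/(p-1)}$ and the tangent-line bound at $t=0$. For the right inequality you take a genuinely different route: the paper invokes condenser symmetrization \cite[7.5]{Sar} to compare $pcap(K_1,\Omega_0)$ with the capacity of the equi-area annulus, whereas you use Theorem~\ref{t2b}(ii) to obtain convexity of $A^{(p-2)/(2(p-1))}$ and then the tangent-line inequality at $t=0$ together with \eqref{e1n}. Your route is more self-contained (it stays inside the convexity framework and avoids the external symmetrization theorem) and delivers the equality characterization for free; the paper's route, on the other hand, needs only the areas $A(0),A(1)$ and does not use the overdetermined boundary condition at all for this half.

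Your argument for (ii), however, has a genuine gap. The Chebyshev step hinges on the claim that along each $\Gamma_t$ the radius of curvature $\kappa(\cdot,t)^{-1}$ and $|\nabla u|^{-1}$ are comonotone in $\theta$, and you yourself flag this as ``the main obstacle.'' That comonotonicity is not established in \cite[Section~3]{Log} (the formulas there relate $\partial_t h$, $\partial_\theta h$ and $\kappa$ pointwise via the PDE, but give no monotone coupling of $\kappa$ and $|\nabla u|$ along a level curve), and there is no reason it should hold for a general convex $\Omega_1$. Your fallback via (i) does not rescue \eqref{e2na} either: applying (i) to $(\overline{\Omega}_s,\Omega_0)$ yields $\ell(s)^\gamma-a(s)^\alpha\le\ell(0)^\gamma-a(0)^\alpha$, which is not the same as $\ell(s)^2-a(s)\ge\ell(0)^2-a(0)$; indeed the ratio inequality $L(0)^2/A(0)\le L(s)^2/A(s)$ combined with $L^2\ge4\pi A$ pushes the difference the wrong way. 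The paper's proof of \eqref{e2n} combines two ingredients you already have: H\"older on $\Gamma_t$ gives $(L(t))^{p/(p-1)}\le(2\pi c_p^{\,p-1})^{1/(p-1)}(-A'(t))$, while the once-integrated length convexity $L'(t)/L'(0)\le(L(t)/L(0))^{1/(p-1)}$ together with \eqref{e0e} gives $2L(t)L'(t)\ge-4\pi(2\pi c_p^{\,p-1})^{-1/(p-1)}(L(t))^{p/(p-1)}$; adding these yields $\Phi'(t)\ge0$ directly.
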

\begin{proof} (i) The case $p=2$ of (\ref{e2m}) can be seen from \cite[(5.4) and (5.10)]{Log}. So it remains to check the case $1<p<2$ of (\ref{e2m}). 

Let us begin with proving the left-hand-inequality of (\ref{e2m}) in
the case $1<p<2$. Note that the second inequality of (\ref{e2e}) is equivalent to
$$
\left(\frac{L''(t)}{-L'(t)}\right)\ge (p-1)^{-1}\left(\frac{-L'(t)}{L(t)}\right).
$$
So, integrating this inequality over $[0,t]\subset[0,1]$ derives
\begin{equation}
\label{e2o}
\frac{L'(t)}{L'(0)}\le \left(\frac{L(t)}{L(0)}\right)^{(p-1)^{-1}},
\end{equation}
namely,
$$
\big(L(t)\big)^{-(p-1)^{-1}}L'(t)\ge{L'(0)}{\big(L(0)\big)^{-(p-1)^{-1}}}.
$$
An integration of the last inequality over $[0,1]$ yields that if $1<p<2$ then
$$
\Big(\frac{p-1}{p-2}\Big)\Big(\big(L(1)\big)^\frac{p-2}{p-1} -\big(L(0)\big)^\frac{p-2}{p-1}\Big)\ge{L'(0)}{\big(L(0)\big)^{-(p-1)^{-1}}}.
$$
Since $|\nabla u|$ is just the constant $c$ on ${\partial\Omega_0}$, an application of (\ref{e1b}) and (\ref{e2j}), and (\ref{e2k}) gives
\begin{equation}
\label{e0e}
\frac{L'(0)}{\big(L(0)\big)^{(p-1)^{-1}}}=-\frac{(2\pi)^\frac{p-2}{p-1}}{c_p}.
\end{equation}
Upon putting this formula into the right-hand-side of the last inequality, we obtain the required estimate.

Next, let us verify the right-hand-inequality of (\ref{e2m}) in the case $1<p<2$.
In doing so, let $(D(0,r_0), D(0,r_1))$ be the origin-centered disk pair with
$A(j)=\pi r_j^2$ for $j=0,1$. Then an application of \cite[7.5 The main theorem]{Sar} and \cite[(2.13)]{HKM} gives
\begin{align*}
c_p&\ge\big((2\pi)^{-1}pcap(\overline{D(0,r_1)},D(0,r_0)\big)^\frac1{p-1}\\
&=\Big(\frac{2-p}{p-1}\Big)\Big(r_1^\frac{p-2}{p-1}-r_0^\frac{p-2}{p-1}\Big)^{-1}\\
&=\Big(\frac{2-p}{p-1}\Big)\left(\Big(\frac{A(1)}{\pi}\Big)^\frac{p-2}{2(p-1)}-\Big(\frac{A(0)}{\pi}\Big)^\frac{p-2}{2(p-1)}
\right)^{-1},
\end{align*}
as desired.

(ii) To establish (\ref{e2n}), we just observe (\ref{e1b}) and the following formula
$$
\begin{cases}
-A'(t)=\int_{{\Gamma_t}}|\nabla u|^{-1}\,dL;\\
\quad {2\pi}{c_p^{p-1}}=\int_{\Gamma_t}|\nabla u|^{p-1}\,dL,
\end{cases}
$$
and then utilize the H\"older inequality to achieve
$$
L(t)\le\Big(\int_{{\Gamma_t}}|\nabla u|^{p-1}\,dL\Big)^{p^{-1}}\Big(\int_{\Gamma_t}|\nabla u|^{-1}\,dL\Big)^{1-p^{-1}}=\big(2\pi c_p^{p-1}\big)^{p^{-1}}\big(-A'(t)\big)^{1-p^{-1}}.
$$
This, along with (\ref{e2o}) and (\ref{e0e}), derives
$$
\frac{d}{dt}\Big(\big(L(t)\big)^2-4\pi A(t)\Big)\ge-4\pi\left(\Big({2\pi}{c_p^{p-1}}\Big)^{-(p-1)^{-1}}\big(L(t)\big)^{p(p-1)^{-1}}+A'(t)\right)\ge 0,
$$
as desired. Of course, this gives (\ref{e2na}) right away.
\end{proof}

Needless to say, the right-hand-inequalities of (\ref{e2m}) are still valid for more general condensers. In addition, we can find out their pure capacity versions. Given a compact subset $K$ of $\mathbb R^2$. If $r>0$ is so large that $K$ is contained in the origin-centered open disk $D(0,r)$, then it is not hard to see that
  $$
  r\mapsto F_p(K,r)\equiv\begin{cases}
  \left(\Big(pcap\big(K,D(0,r)\big)\Big)^{\frac1{1-p}}+(2\pi)^{\frac1{1-p}}\Big(\frac{p-1}{2-p}\Big)r^{\frac{p-2}{p-1}}\right)^{1-p}\ \ \hbox{for}\ \ 1<p<2;\\
  \exp\left(-2\pi\Big(\Big(pcap\big(K,D(0,r)\big)\Big)^\frac{1}{1-p}+(2\pi)^{-1}\ln r^{-1}\Big)\right)\ \ \hbox{for}\ \ p=2,
  \end{cases}
  $$
  is a decreasing function on $[0,\infty)$ (cf. \cite[Lemma 2.1]{Flu} and \cite[Section 3.1]{Bet}). So, it is reasonable to define
  \begin{equation}
  \label{capD1}
  pcap(K)=\lim_{r\to\infty}F_p(K,r).
  \end{equation}
  Below is a known chain of the isocapacitary/isoperimetric inequalities (cf. \cite{Maz}, \cite[pages 140-141]{Ran} and \cite{AHN, HN, SoZ, BPS}) for the closure $\overline{\Omega}$ of a bounded domain $\Omega\subset\mathbb R^2$ with its area $A(\overline{\Omega})$ and diameter $diam(\overline{\Omega})$ as well as length $L(\overline{\Omega})$ of the boundary $\partial \overline{\Omega}$:
  \begin{equation*}
  \label{e4a}
  \Big(\frac{A(\overline{\Omega})}{\pi}\Big)^\frac12\le
  \begin{cases}
   \Big(\big(\frac{p-1}{2-p}\big)^{p-1}\big(\frac{pcap(\overline{\Omega}}{2\pi}\big)\Big)^\frac1{2-p}\ \ \hbox{for}\ \ 1<p<2\\
   pcap(\overline{\Omega})\ \ \hbox{for}\ \ p=2
   \end{cases}
   \le \frac{diam(\overline{\Omega})}{2}\le\frac{L(\overline{\Omega})}{4},
  \end{equation*}
  which holds with the sign of equality in the first two estimates if $\overline{\Omega}$ is a closed disk $\overline{D(0,r)}$ with (cf. (\ref{e1i}))
  $$
  pcap(\overline{D(0,r)})=pcap(\overline{D(0,r)},\mathbb R^2).
  $$
  
As an extension of \cite[Lemma 1]{Bet}, the following isocapacitary deficit result gives a sharp lower bound of $pcap(K,\Omega)$ in terms of $pcap(K)$ and $pcap(\overline{\Omega})$.

 \begin{theorem}\label{t2cc} Let $(K,\Omega)$ be a condenser in $\mathbb R^2$ with $\Omega$ being bounded.
 
 \begin{equation}
 \label{e2mm}
 \Big(\frac{pcap(K,\Omega)}{2\pi}\Big)^\frac{1}{1-p}\le
 \begin{cases}
  \big(\frac{pcap(K)}{2\pi}\big)^\frac{1}{1-p}-\big(\frac{pcap(\overline{\Omega})}{2\pi}\big)^\frac{1}{1-p}\ \ \hbox{for}\ \ 1<p<2;\\
 \ln\frac{pcap(\overline{\Omega})}{pcap(K)}\ \ \hbox{for}\ \ p=2,
 \end{cases}
 \end{equation}
 with equality if $\Omega\setminus K$ is a circular annulus. 
 \end{theorem}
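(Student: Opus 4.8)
The plan is to derive (\ref{e2mm}) from a series law for the $p$-modulus $(pcap)^{\frac1{1-p}}$ of nested plane condensers together with the renormalization built into $F_p(\cdot,r)$, letting $r\to\infty$ so that the divergent pieces cancel; this is the natural $p$-analogue of Betsakos' argument for \cite[Lemma 1]{Bet}, and it uses neither convexity nor the ring hypothesis. Fix $r>0$ so large that $\overline\Omega\subset D(0,r)$, and let $u_1,u_2$ be the $p$-capacity potentials of $(K,\Omega)$ and $(\overline\Omega,D(0,r))$ (they exist, the two infima being finite). For $\lambda\in[0,1]$ the function $v=\lambda u_2+(1-\lambda)u_1$ satisfies $v\ge 1_K$ and $v\in\dot{W}^{1,p}(D(0,r))$; moreover $\nabla u_2\equiv0$ a.e. on $\overline\Omega$ and $\nabla u_1\equiv0$ a.e. off $\overline\Omega$, so
$$
pcap(K,D(0,r))\le\int_{D(0,r)}|\nabla v|^p\,dA=(1-\lambda)^p\,pcap(K,\Omega)+\lambda^p\,pcap(\overline\Omega,D(0,r)).
$$
Writing $a=pcap(K,\Omega)$, $b=pcap(\overline\Omega,D(0,r))$ and minimizing the right-hand side over $\lambda$ (the minimizer is $\lambda=a^{\frac1{p-1}}(a^{\frac1{p-1}}+b^{\frac1{p-1}})^{-1}$, the minimum value $ab\,(a^{\frac1{p-1}}+b^{\frac1{p-1}})^{1-p}$), then raising to the power $\frac1{1-p}$, yields
$$
\big(pcap(K,D(0,r))\big)^{\frac1{1-p}}\ \ge\ \big(pcap(K,\Omega)\big)^{\frac1{1-p}}+\big(pcap(\overline\Omega,D(0,r))\big)^{\frac1{1-p}}.
$$
That $v\in\dot{W}^{1,p}(D(0,r))$ is the one delicate point: $v\in W^{1,p}(D(0,r))$, it vanishes on $\partial D(0,r)$ in the trace sense, and $\dot{W}^{1,p}(D(0,r))=W^{1,p}_0(D(0,r))$ by Poincar\'e's inequality on the disk; alternatively one repeats the computation with $C_c^\infty$ near-minimizers and lets the approximation error go to $0$.

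Next I would rewrite the two outer capacities through the definition of $F_p$: for $E\in\{K,\overline\Omega\}$ and $1<p<2$,
$$
\big(pcap(E,D(0,r))\big)^{\frac1{1-p}}=\big(F_p(E,r)\big)^{\frac1{1-p}}-(2\pi)^{\frac1{1-p}}\big(\tfrac{p-1}{2-p}\big)r^{\frac{p-2}{p-1}},
$$
while for $p=2$, $\big(pcap(E,D(0,r))\big)^{-1}=\tfrac1{2\pi}\ln\tfrac{r}{F_2(E,r)}$. Substituting these into the series law, the $r$-dependent terms occur with opposite signs on the two sides and cancel, leaving $\big(pcap(K,\Omega)\big)^{\frac1{1-p}}\le\big(F_p(K,r)\big)^{\frac1{1-p}}-\big(F_p(\overline\Omega,r)\big)^{\frac1{1-p}}$ for $1<p<2$ and $\big(pcap(K,\Omega)\big)^{-1}\le\tfrac1{2\pi}\ln\tfrac{F_2(\overline\Omega,r)}{F_2(K,r)}$ for $p=2$. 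Letting $r\to\infty$, using $F_p(E,r)\to pcap(E)$ by (\ref{capD1}) and the continuity of $x\mapsto x^{\frac1{1-p}}$ on $(0,\infty)$ (resp. of $\ln$), and multiplying by $(2\pi)^{\frac1{1-p}}$, gives (\ref{e2mm}). (If $pcap(K)=0$ both sides of (\ref{e2mm}) equal $+\infty$, so one may assume $pcap(K)>0$; then $pcap(\overline\Omega)\ge pcap(K)>0$ since $F_p(\cdot,r)$ is monotone under inclusion.)

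For the equality assertion I would take $K=\overline{D(0,R_0)}\subset D(0,R_1)=\Omega$: from (\ref{e1i}) and the radial formula for an annulus, $\big(pcap(\overline{D(0,R)})/2\pi\big)^{\frac1{1-p}}=\big(\tfrac{p-1}{2-p}\big)R^{\frac{p-2}{p-1}}$ and $\big(pcap(\overline{D(0,R_0)},D(0,R_1))/2\pi\big)^{\frac1{1-p}}=\big(\tfrac{p-1}{2-p}\big)\big(R_0^{\frac{p-2}{p-1}}-R_1^{\frac{p-2}{p-1}}\big)$ for $1<p<2$ (with the parallel logarithmic identities for $p=2$), and the two sides of (\ref{e2mm}) coincide; equivalently, for a circular annulus the radial $p$-potential of $D(0,r)\setminus\overline{D(0,R_0)}$ restricts on the two sub-annuli to affine reparametrizations of their radial potentials, so the gluing step above is an equality. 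I expect the main obstacle to be not any single estimate but the bookkeeping of the first two paragraphs: pulling the exponent $\frac1{1-p}$ out of the $\lambda$-optimization and verifying that the divergent $r$-terms genuinely cancel after the $F_p$ substitution — that cancellation is precisely why the renormalized capacities $pcap(K)$ and $pcap(\overline\Omega)$ are the right-hand quantities.
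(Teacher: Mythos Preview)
Your proof is correct and follows essentially the same route as the paper: both obtain the series law $pmod(K,D(0,r))\ge pmod(K,\Omega)+pmod(\overline\Omega,D(0,r))$, add and subtract the divergent renormalizing term so the large-disk moduli become $F_p(K,r)$ and $F_p(\overline\Omega,r)$, and let $r\to\infty$. The only difference is cosmetic: the paper cites the series law as the subadditivity of $p$-modulus from \cite[Lemma 2.1]{Flu}, whereas you re-prove it from scratch by gluing the two potentials and optimizing over $\lambda$, and you handle the $pcap(K)=0$ edge case explicitly.
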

 \begin{proof} 
 
For such a large $R>0$ that $\Omega\subset D(0,R)$, let
$$
{M(p,R)}\equiv\begin{cases}(2\pi)^\frac1{1-p}\Big(\frac{p-1}{2-p}\Big)R^\frac{p-2}{p-1}\ \ \hbox{for}\ \ 1<p<2;\\
(2\pi)^{-1}\ln R\ \ \hbox{for}\ \ p=2.
\end{cases}
$$
Then, an application of \cite[Lemma 2.1]{Flu} - the subadditivity of $p$-modulus and (\ref{capD1}) yields
\begin{align*}
 \big(pcap(K,\Omega)\big)^\frac1{1-p}&=pmod(K,\Omega)\\
 &\le pmod(K,D(0,R))+M(p,R)-pmod(\overline{\Omega},D(0,R))-M(p,R)\\
 &=\big(pcap(K,D(0,R))\big)^\frac1{1-p}+M(p,R)-\big(pcap(\overline{\Omega},D(0,R))\big)^\frac1{1-p}-M(p,R)\\
 &\to
 \begin{cases} \left(\big({pcap(K)}
 \big)^\frac{1}{1-p} - \big({pcap(\overline{\Omega})}\big)^\frac{1}{1-p}\right)\ \ \hbox{for}\ \ 1<p<2;\\
(2\pi)^{-1}\ln\frac{2cap(\overline{\Omega})}{2cap(K)}\ \ \hbox{for}\ \ p=2,
\end{cases} 
\quad \hbox{as}\quad R\to\infty,
 \end{align*}
 whence reaching (\ref{e2mm}) whose equality follows from the following formula for $0<r<R<\infty$:
  \begin{equation*}
  \label{e2e2}
  \left(\frac{pcap(\overline{D(0,r)}, D(0,R))}{2\pi}\right)^\frac1{1-p}=
  \begin{cases}\Big(\frac{p-1}{2-p}\Big)(R^\frac{p-2}{p-1}-r^\frac{p-2}{p-1})\ \hbox{for}\ \ 1<p<2;\\
  \ln\frac{R}{r}\ \ \hbox{for}\ \ p=2,
  \end{cases}
  \end{equation*}
  see also \cite[p.35]{HKM}. 
 
 \end{proof}
 \section{Convexity for $p$-Green functions of convex domains}\label{s4}
 \setcounter{equation}{0}
 
 The following is a generalization of \cite[Theorems 4.1-4.2]{Log} from $p=2$ to $p\in (1,2]$.
 
 \begin{theorem} \label{t31} Let $1<p\le 2$ and $\Omega\subset\mathbb R^2$ be a convex domain containing a given point $o$. For $t\ge 0$ set
 $$
 \begin{cases}
 A_g(t)=\int_{\{z\in\Omega: g_{\Omega}(o,z)\ge t\}}\,dA;\\
 L_g(t)=\int_{\{z\in\Omega: g_{\Omega}(o,z)=t\}}\,dL.
 \end{cases}
 $$
 Then
 
 \item{\rm(i)}
 
 \begin{equation}
 \label{e3a}
\begin{cases}
 A_g'(t)A_g'''(t)-2p^{-1}\big(A_g''(t)\big)^2\ge 0;\\
 L_g(t)L_g''(t)-(p-1)^{-1}\big(L_g'(t)\big)^2\ge 0,
\end{cases}
\end{equation} 
with equality if $\Omega$ is a disk centered at $o$.
 
 \item{\rm(ii)}
 \begin{equation}
  \label{e3b}
  \begin{cases}
  A_g(t)A_g''(t)\ge 2^{-1}p(p-1)^{-1}\big(A_g'(t)\big)^2;\\
  A_g''(t)\ge 2\pi p(p-1)^{-1}\big(A_g'(t)\big)^{2p^{-1}};\\
\big(A_g'(t)\big)^{2-\frac2p}\ge4\pi A_g(t),
\end{cases}
\end{equation}
with equality if $\Omega$ is a disk centered at $o$.

\item{\rm(iii)}
\begin{equation}
\label{e3c}
\begin{cases}
A_g(t)\le\begin{cases}
\Big(\big(A_g(0)\big)^\frac{p-2}{2p-2}+(\frac{2-p}{2p-2})(4\pi)^\frac{p}{2p-2}t\Big)^{\frac{2p-2}{p-2}}\ \ \hbox{for}\ \ 1<p<2;\\
A_g(0)\exp(-4\pi t)\ \ \hbox{for}\ \ p=2,
\end{cases}\\
L_g(t)\le\begin{cases}
\Big(\big(L_g(0)\big)^\frac{p-2}{p-1}+(\frac{2-p}{p-1})2\pi t\big)\Big)^\frac{p-1}{p-2}\ \ \hbox{for}\ \ 1<p<2;\\
L_g(0)\exp(-2\pi t)\ \ \hbox{for}\ \ p=2,
\end{cases}
\end{cases}
\end{equation}
with equality if $\Omega$ is a disk centered at $o$.
\end{theorem}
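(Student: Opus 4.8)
The plan is to derive all three parts of Theorem~\ref{t31} from \S\ref{s2}--\S\ref{s3} by using that the $p$-Green function is, on each of its sublevel regions, a rescaled $p$-capacity potential of a convex ring. Fix $t_0>0$ and take $o$ as the origin. By the Lewis convexity \cite[Theorem 1]{Lewis} (applied to the condensers $\big(\overline{D(o,r)},\Omega\big)$ and passed to the limit $r\to0$, or read off directly from the level sets of $g_\Omega(o,\cdot)$), the superlevel set $K_{t_0}:=\{z\in\Omega:g_\Omega(o,z)\ge t_0\}$ is a compact convex neighbourhood of $o$, the curve $\{g_\Omega(o,\cdot)=t_0\}$ is strictly convex, and $(K_{t_0},\Omega)$ is a convex ring. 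Since $g_\Omega(o,\cdot)$ is $p$-harmonic in $\Omega\setminus K_{t_0}$, vanishes on $\partial\Omega$ and equals $t_0$ on $\partial K_{t_0}$, uniqueness gives that $u:=t_0^{-1}g_\Omega(o,\cdot)$ coincides on $\overline\Omega\setminus K_{t_0}^\circ$ with the $p$-capacity potential of $(K_{t_0},\Omega)$ (the solution of \eqref{e1a} with $K=K_{t_0}$). Hence the level curve $\{u=t\}$ equals $\{g_\Omega(o,\cdot)=tt_0\}$, so in the notation of Longinetti's convexity $A(t)=A_g(tt_0)$ and $L(t)=L_g(tt_0)$ for $t\in[0,1]$; in particular $A_g$ and $L_g$ inherit the regularity ($C^3$, resp.\ $C^2$) and strict monotonicity used below.

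Part (i) is then immediate: applying Longinetti's convexity \cite[Theorems 3.1--3.2]{Log} to $u$ and substituting $A(t)=A_g(tt_0)$, $L(t)=L_g(tt_0)$, the two inequalities in \eqref{e2e} become, after dividing out the positive powers of $t_0$ produced by the chain rule,
\[
A_g'(s)A_g'''(s)-2p^{-1}\big(A_g''(s)\big)^2\ge0,\qquad L_g(s)L_g''(s)-(p-1)^{-1}\big(L_g'(s)\big)^2\ge0
\]
at $s=tt_0$; since $t_0>0$ and $t\in[0,1]$ are arbitrary, $s$ runs over $[0,\infty)$, giving \eqref{e3a}, and the equality clause is inherited from Longinetti's (all level curves concentric circles, the limiting one as $s\to0^+$ being $\partial\Omega$, exactly when $\Omega$ is a disk centred at $o$). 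For part (ii) I would first record the level curve representation for $g_\Omega$: integrating $div\big(|\nabla g_\Omega(o,\cdot)|^{p-2}\nabla g_\Omega(o,\cdot)\big)=-\delta(o,\cdot)$ over $\{g_\Omega(o,\cdot)>t\}$ and applying the divergence theorem gives $\int_{\{g_\Omega(o,\cdot)=t\}}|\nabla g_\Omega(o,\cdot)|^{p-1}\,dL=1$ for every $t>0$ (the normalisation \eqref{e1e} is chosen to make this constant $1$). Since $-A_g'(t)=\int_{\{g_\Omega(o,\cdot)=t\}}|\nabla g_\Omega(o,\cdot)|^{-1}\,dL$ by the coarea formula, H\"older's inequality with exponents $p$ and $p/(p-1)$ yields $L_g(t)\le\big(-A_g'(t)\big)^{(p-1)/p}$, which together with the isoperimetric inequality \eqref{e1n} written as $4\pi A_g(t)\le\big(L_g(t)\big)^2$ gives the third inequality of \eqref{e3b}. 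For the first inequality of \eqref{e3b}, set $M_p(t)=\big(A_g(t)A_g''(t)-p(2p-2)^{-1}(A_g'(t))^2\big)\big(A_g(t)\big)^{-1/(p-1)}$; repeating verbatim the computation in the proof of Theorem~\ref{t2b}(ii), with the first inequality of \eqref{e3a} and $A_g'\le0$, shows $M_p'(t)\le0$, so $M_p$ is non-increasing and $M_p(t)\ge\lim_{s\to\infty}M_p(s)$; granting that this limit is $0$ (see below) gives $M_p\ge0$, hence the first inequality. The second inequality of \eqref{e3b} then follows by inserting $A_g(t)\le(4\pi)^{-1}\big(-A_g'(t)\big)^{2-2/p}$ (its third inequality) into the first.

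Part (iii) is integration of these differential inequalities. The second inequality of \eqref{e3a} is equivalent to $t\mapsto -L_g'(t)/\big(L_g(t)\big)^{1/(p-1)}$ being non-increasing; since its limit as $t\to\infty$ is $2\pi$ (the value for the model Green function $k_p(|z-o|)$, computed from \eqref{e1e}), one obtains $-L_g'(t)\ge2\pi\big(L_g(t)\big)^{1/(p-1)}$ for all $t\ge0$, and integrating this from $0$ to $t$ with $L_g(0)=L(\partial\Omega)$ yields the bound for $L_g(t)$ in \eqref{e3c} (the two displayed forms being the cases $1<p<2$ and $p=2$). Likewise the third inequality of \eqref{e3b} reads $-A_g'(t)\ge(4\pi)^{p/(2p-2)}\big(A_g(t)\big)^{p/(2p-2)}$, and integrating it from $0$ with $A_g(0)=A(\Omega)$ gives the bound for $A_g(t)$; in every case equality for a disk centred at $o$ comes from the equality case of \eqref{e1n} together with the identity $M_p\equiv0$ that holds for the model.

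The one place where genuinely new work beyond \S\ref{s2}--\S\ref{s3} is needed, and the main obstacle, is the behaviour as $t\to\infty$, namely $\lim_{t\to\infty}M_p(t)=0$ and $\lim_{t\to\infty}\big(-L_g'(t)\big)/\big(L_g(t)\big)^{1/(p-1)}=2\pi$. Both express that the level curves of the $p$-Green function are round to sufficient order near the singularity: on an exact circle $\{k_p(|z-o|)=t\}$ a direct computation gives $A_gA_g''-p(2p-2)^{-1}(A_g')^2\equiv0$ and $\big(-L_g'\big)/\big(L_g\big)^{1/(p-1)}\equiv2\pi$, so it is the error that must be estimated. For $p=2$ this is classical, $g_\Omega(o,\cdot)+(2\pi)^{-1}\log|\cdot-o|$ being harmonic across $o$; for $1<p<2$ one has to upgrade the $C^0$ expansion $g_\Omega(o,z)=k_p(|z-o|)-\tau_p(o,\Omega)+o(1)$ of \eqref{e1f} to control of $A_g,A_g',A_g''$ (and $L_g,L_g'$) as $t\to\infty$, which rests on the regularity theory for $p$-harmonic functions near an isolated singularity and on the properties of the $p$-harmonic radius recorded in \eqref{e1e}--\eqref{e1h}.
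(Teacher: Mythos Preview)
Your proof of (i), of the third and second inequalities in (ii), and of the length bound in (iii) matches the paper's argument essentially line for line. There are, however, two places where your route and the paper's diverge.

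\medskip
\textbf{First inequality of (ii).} Here the paper does \emph{not} argue via monotonicity of $M_p$ and the limit $\lim_{s\to\infty}M_p(s)=0$. Instead it approximates $g_\Omega(o,\cdot)$ from above and below by $p$-capacity potentials $u_j,v_j$ of the convex rings $(\overline{D_j},\Omega)$, where $D_j$ are disks centred at $o$ with radii shrinking to $0$; since the inner boundary $\partial D_j$ is a circle, Theorem~\ref{t2b}(i) (inequality \eqref{e2h}) applies directly to $u_j$ and $v_j$, and passing to the limit gives $2(p-1)A_g A_g''\ge p(A_g')^2$. The point is that this requires only $b_j-a_j\to 0$ and comparison, hence only zeroth- and first-order information about $g_\Omega$ near $o$. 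Your route, by contrast, asks for $\lim_{s\to\infty}M_p(s)=0$, which demands second-derivative control of $A_g$ together with a cancellation in the numerator $A_gA_g''-\tfrac{p}{2p-2}(A_g')^2$ against the vanishing denominator $A_g^{1/(p-1)}$; you correctly flag this as the main obstacle, and it is genuinely more delicate than what the paper needs. So the paper's approximation-by-disks device is the missing idea that removes your obstacle.

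\medskip
\textbf{Area bound in (iii).} Here your argument is actually shorter than the paper's: you integrate the third inequality of \eqref{e3b}, i.e.\ $-A_g'(t)\ge(4\pi)^{p/(2p-2)}A_g(t)^{p/(2p-2)}$, directly from $0$ to $t$, which yields the stated bound with no appeal to the behaviour as $t\to\infty$. The paper instead uses the first inequality of \eqref{e3b} to see that $t\mapsto A_g'(t)A_g(t)^{p/(2-2p)}$ is increasing, bounds it by its limit $\gamma_p$, and then has to identify $\gamma_p=-(4\pi)^{p/(2p-2)}$ from the asymptotic shape of the level sets near $o$. Both are correct; yours avoids the asymptotic step entirely for the area part.
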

\begin{proof} (i) Since $\Omega$ is convex, each level curve of $g_\Omega(o,\cdot)$ is strictly convex (cf. \cite[Theorem 1]{Lewis}). This fact, plus (\ref{e2e}), implies (\ref{e3a}).

(ii) Next, noticing the following fundamental formula for $g_\Omega(o,\cdot)$ (cf. \cite[Lemma 9.1]{Flu}):
\begin{equation*}
\label{e3d}
\begin{cases}
-A_g'(t)=\int_{\{z\in\Omega: g_{\Omega}(o,z)=t\}}|\nabla g_{\Omega}(o,z)|^{-1} \,dL(z);\\
1=\int_{\{z\in\Omega: g_{\Omega}(o,z)=t\}}|\nabla g_{\Omega}(o,z)|^{p-1} \,dL(z),
\end{cases}
\end{equation*}
we employ the H\"older inequality to derive
\begin{equation}
\label{e3e}
L_g(t)\le\big(-A_g'(t)\big)^{1-p^{-1}}\left(\int_{\{z\in\Omega: g_{\Omega}(o,z)=t\}}|\nabla g_{\Omega}(o,z)|^{p-1} \,dL(z)\right)^{p^{-1}}=\big(-A_g'(t)\big)^{1-p^{-1}}.
\end{equation}
Clearly, the following isoperimetric inequality
\begin{equation}
\label{e3f}
A_g(t)\le (4\pi)^{-1}{\big(L_g(t)\big)^2}
\end{equation}
holds. So, a combination of (\ref{e3e}) and (\ref{e3f}) gives the third inequality of (\ref{e3b}). This, plus the first inequality of (\ref{e3b}), implies the second inequality of (\ref{e3b}). Thus, it remains to verify the first inequality of (\ref{e3b}). In doing so, let us choose a sequence of open disks $\{D_j\}_{j=1}^\infty$ centered at $o$ with radius tending to $0$. If
$$
\begin{cases}
a_j=\min_{z\in\partial D_j}g_{D_j}(o,z);\\
b_j=\max_{z\in\partial D_j}g_{D_j}(o,z),
\end{cases}
$$
then (\ref{e1f}) can be used to deduce $\lim_{j\to\infty}(b_j-a_j)=0$. Also, if $u_j$ and $v_j$ are $p$-harmonic in $\Omega\setminus D_j$, i.e.,
$$
div(|\nabla u_j|^{p-2}\nabla u_j)=0=div(|\nabla v_j|^{p-2}\nabla v_j)\quad\hbox{on}\quad\Omega\setminus D_j,
$$
subject to
$$
\begin{cases}
u_j(z)=v_j(z)=0\quad\forall\quad z\in\partial\Omega;\\
u_j(z)=b_j\quad\forall\quad z\in\partial D_j;\\
v_j(z)=a_j\quad\forall\quad z\in\partial D_j,\\
\end{cases}
$$
then an application of the comparison principle for $p$-harmonic functions (cf. \cite{HS}) derives
$$
v_j(z)\le g_{\Omega}(o,z)\le u_j(z)\quad\forall\quad z\in\Omega\setminus \overline{D_j}.
$$
This, together with $\lim_{j\to\infty}(b_j-a_j)=0$, implies that 
$$
\lim_{j\to\infty}u_j(z)=\lim_{j\to\infty}v_j(z)=g_{\Omega}(o,z)\quad\forall\quad z\in \Omega\setminus\overline{D(o,r)}
$$
holds for any small $r>0$ such that the open disk $D(o,r)$ is contained in $\Omega$. Now, using (\ref{e2h}) for $u_j$ and $v_j$ and letting $j\to\infty$ we arrive at the first inequality of (\ref{e3b}).

(iii) Finally, let us check (\ref{e3c}). Thanks to the first inequality of (\ref{e3b}) and the second inequality of (\ref{e3a}), it is enough to verify the area part of (\ref{e3c}). Note that the first inequality of (\ref{e3b}) yields that 
$$
t\mapsto A_g'(t)\big(A_g(t)\big)^{p(2-2p)^{-1}}
$$ 
is an increasing function on $[0,\infty)$. So, it follows that
$$
A_g'(t)\big(A_g(t)\big)^{p(2-2p)^{-1}}\le\lim_{s\to\infty}A_g'(s)\big(A_g(s)\big)^{p(2-2p)^{-1}}\equiv \gamma_p\quad\forall\quad t\in [0,\infty).
$$
This, along with an integration of the last inequality over $[0,t]$, gives
$$
A_g(t)\le\begin{cases}
\Big(\big(A_g(0)\big)^\frac{p-2}{2p-2}+(\frac{p-2}{2p-2})\gamma_p t\Big)^{\frac{2-2p}{p-2}}\ \ \hbox{for}\ \ 1<p<2;\\
A_g(0)\exp(\gamma_p t)\ \ \hbox{for}\ \ p=2.
\end{cases}
$$
Thus, it remains to show $\gamma_p=-(4\pi)^{p(2p-2)^{-1}}$. But, this follows from the basic fact (cf. \cite[Lemma 9.1]{Flu} and (\ref{e1g})) that when $t\to\infty$ the level set $\{z\in\Omega: g_{\Omega}(o,z)\ge t\}$ approaches a closed disk centered at $o$ with radius 
$$
r=\begin{cases}
\Big(\big({(2-p)}{(p-1)^{-1}}\big)(2\pi)^{(p-1)^{-1}}\Big)^{\frac{p-1}{p-2}}\big(t+\tau_p(o,\Omega)\big)^\frac{p-1}{p-2}\ \ \hbox{for}\ \ 1<p<2;\\
\exp\Big(-2\pi\big(t+\tau_p(o,\Omega)\big)\Big)\ \ \hbox{for}\ \ p=2.
\end{cases}
$$
\end{proof}

 \setcounter{equation}{0}

\end{document}